
\documentclass{birkmult}

\newcommand{\Sym}{\ensuremath \mathrm{Sym}}
\newcommand{\ord}{\ensuremath \mathop \mathrm{ord} \nolimits}

%
%
%
 \newtheorem{thm}{Theorem}[section]

 \theoremstyle{definition}
 \newtheorem{defn}[thm]{Definition}
 \theoremstyle{remark}
 \newtheorem{rem}[thm]{Remark}
 \newtheorem{ex}[thm]{Example}
 \numberwithin{equation}{section}

\begin{document}
%
%
%
%
%
%
%
%
%
\title[Parametric Factorizations of LPDOs]
 {Parametric Factorizations of \\
  Second-, Third- and Fourth-Order \\
  Linear Partial Differential Operators \\
  with a Completely Factorable Symbol
  on the Plane }
\author[Ekaterina Shemyakova]{Ekaterina Shemyakova}

\address{%
Research Institute for Symbolic Computations (RISC)\\
Johannes Kepler University\\
Altenbergerstr. 69\\
A-4040 Linz\\
Austria
}

\email{kath@risc.uni-linz.ac.at}

\thanks{This work was supported by Austrian Science Foundation (FWF) under
the project SFB F013/F1304.}
\subjclass{Primary 47F05; Secondary 68W30}

\keywords{partial differential, LPDO, factorization, parametric
factorization, family of factorizations}

\date{November 22, 2006}

\begin{abstract}
Parametric factorizations of linear partial operators on the plane
are considered for operators of orders two, three and four. The
operators are assumed to have a completely factorable symbol. It is
proved that ``irreducible'' parametric factorizations may exist only
for a few certain types of factorizations. Examples are given of the
parametric families for each of the possible types. For the
operators of orders two and three, it is shown that any
factorization family is parameterized by a single univariate
function
 (which can be a constant function).
\end{abstract}

\maketitle
\section{Introduction}

The factorization of the Linear Partial Differential Operator
(LPDO)
\[
L = \sum_{i_1 +  \dots + i_n \leq d} a_{i_1 \dots i_n} D_1^{i_1}
\dots D_n^{i_n} \ ,
\]
where the coefficients belong to some differential ring, is an
important technique, used by modern algorithms for the integration of the
corresponding Linear Partial Differential Equation (LPDE)
$L(f)=0$.
Many of the algorithms have appeared as advanced modifications and
generalizations of the well-known \emph{Laplace-Euler transformation
method} (\emph{the Laplace cascade method}) which serves as a method
for computing a general solution for the bivariate second-order
linear hyperbolic equations
\begin{equation} \label{hyper_nf}
 u_{xy}+a u_x + b u_y + c=0\ , \quad a=a(x,y)\,, \ b=b(x,y)\,, \ c=c(x,y)\,.
\end{equation}
The basis of the algorithm is the fact that whenever the
 operator in (\ref{hyper_nf}) factors, for instance,
\begin{equation} \label{Laplace_op}
L=D_x \circ D_y + a D_x + b D_y + c = (D_x + b) \circ (D_y + a)\ ,
\end{equation}
the problem of determining all the integrals of the equation
\eqref{hyper_nf} reduces to the problem of integrating the
two first-order equations
\begin{eqnarray*}
         (D_x+b)(u_1)&=&0\ ,\\
         (D_y+a)(u)&=&u_1\ .
\end{eqnarray*}
Accordingly, one obtains the general solution of the original equation
\eqref{hyper_nf} as
\[
z=\Big(p(x)+ \int q(y) e^{\int a dy -b dx} dy \Big)e^{-\int a dy}
\]
with two arbitrary functions $p(x)$ and $q(y)$.
If it is the case that the operator
\eqref{Laplace_op} is not factorable, one may construct (by
\emph{Laplace transformations}) the sequence of operators
\[
\dots \leftrightarrow L_{-2} \leftrightarrow L_{-1} \leftrightarrow
L \leftrightarrow L_1 \leftrightarrow L_2 \leftrightarrow \dots,
\]
such that the kernel of any descendant can be found from the kernel
of the initial operator $L$. Thus, whenever one of those operators
becomes factorable, the general solution of the initial problem can
be found.

Since the 18th century, many generalizations of the method have
been given (for example \cite{ anderson_juras, anderson97_2,athorne1995,
athorne2002, Darboux2, Dini1901, Dini1902,Juras95,Li_S_Ts,Roux1899,
sokolov, tsarev06}), and the problem of constructing
factorization algorithms for different kinds of differential
operators has become important. Over the last decade, a number of new
modifications of the classical algorithms for the factorization of
LPDOs (for example, \cite{GS, GS05, spain, dubna, tsarev98, tsarev05})
have been given. So far, most of the activity has addressed the hyperbolic
case, and there is as yet a lack of knowledge concerning the non-hyperbolic
case.

There is a distinction in kind between the two cases. A
factorization of a hyperbolic LPDO on the plane is determined
uniquely by a factorization of the operator's symbol (principal
symbol) (see Theorem \ref{GS} and \cite{GS}). Thus, the operator
\eqref{Laplace_op} may have at most one factorization of each of the
forms $(D_x+ \dots) \circ (D_y + \dots)$ and $(D_y+ \dots) \circ
(D_x + \dots)$. On the other hand for the non-hyperbolic operator
$D_{xx}$ there is the stereotypical example
\[
D_{xx}=D_x \circ D_x = \big(D_x+ \frac{1}{x+c} \big) \circ \big(D_x
- \frac{1}{x+c} \big)\ ,
\]
where $c$ is an arbitrary parameter.
A more significant example is provided by the
Landau operator $L=D_{xx}(D_x + x D_y) + 2D_{xx}+ 2(x+1)D_{xy}
+D_{x}+ (x+1)D_{y}$, which factors as
\begin{equation} \label{Landau_ex}
L= \Big(D_x + 1 + \frac{1}{x+c(y)} \Big) \circ \Big(D_x + 1 -
\frac{1}{x+c(y)} \Big) \circ \Big(D_x + x D_y \Big)\ ,
\end{equation}
where the function $c(y)$ is arbitrary.
This shows that some LPDOs
may have essentially different factorizations, and, further, that the factors
may contain arbitrary parameters or even functions.
Thus we may have \emph{families of factorizations}.

An LPDO is hyperbolic if its symbol is completely factorable (all
factors are of first order) and each factor has multiplicity one.
In the present paper we consider the case of LPDOs
that have completely factorable symbols, without any additional
requirement. We prove that ``irreducible'' (see Definition
\ref{def_irr})  families of factorizations can exist only for a few
certain types of factorizations.
For these cases explicit examples are given.
For operators of orders two and three, it is
shown that a family may be parameterized by at most one function in
one variable. Our investigations cover the case of ordinary
differential operators as well. Some related remarks about
parametric factorizations for ordinary differential operators may be
found in \cite{Tsarev_enum_ODE}.

The paper is organized as follows. In section~\ref{sec:Prelim}, we give some
general definitions and remarks about factorizations and families of
factorizations of LPDOs. Then in section $3$, we describe our basic
tool --- local linearization. In sections $4$, $5$, and $6$
we formulate and prove results about LPDOs of orders two, three, and
four respectively. The last section contains conclusions, and
some ideas for future work.
\section{Preliminaries}
\label{sec:Prelim}
 We consider a field $K$ with a set $\Delta=\{ \partial_1, \dots, \partial_n\}$
of commuting derivations acting on it. We work with the ring of
linear differential operators $K[D]=K[D_1, \dots, D_n]$, where $D_1,
\dots, D_n$ correspond to the derivations $\partial_1, \dots,
\partial_n$, respectively. Any operator $L \in K[D]$ is of the form
\begin{equation} \label{general form}
L = \sum_{|J| \leq d} a_J D^J,
\end{equation}
 where $a_J \in K, \ J \in \mathbf{N}^n$
and $|J|$ is the sum of the components of $J$. The \emph{symbol} of
$L$ is the homogeneous polynomial $ \Sym_L = \sum_{|J| = d} a_J
X^J$. The operator $L$ is \emph{hyperbolic} if the polynomial $
\Sym_L$ has exactly $d$ different factors.
\begin{defn}
Let $L, F_1, \dots, F_k \in K[D]$.
A factorization $L=F_1 \circ \dots \circ F_k$ is said to be of
the \emph{factorization type} $(S_1) \dots (S_k)$, where $S_i=\Sym_{F_i}$ for all $i$.
\end{defn}
\begin{defn} Let $L \in K[D]$. We say that
\begin{equation} \label{family}
L=F_1(T) \circ \dots \circ F_k(T)
\end{equation}
 is a \emph{family of
factorizations} of $L$ \emph{parameterized by the parameter $T$} if,
for any value $T=T_0$, we have that
 $F_1(T_0), \dots, F_k(T_0)$ are in $K[D]$ and
 $L=F_1(T_0) \circ \dots \circ F_k(T_0)$
holds. Here $T$ is an element from the space of parameters
$\mathbb{T}$. Usually $\mathbb{T}$ is the Cartesian product of some
(functions') fields, in which the number of variables is less than
that in $K$.
\end{defn}
We often consider families without mentioning or designating the
corresponding operator; we define the symbol and the order of the
family to be equal to symbol and order of the operator.
\begin{defn} \label{def_irr}
We say that a family of factorizations \eqref{family} is
\emph{reducible}, if there is $i, \ 1 \leq i \leq k$, such that the
product
\[
F_1(T) \circ \dots \circ F_i(T)
\]
does not depend on the parameter $T$ (in this case the product
$F_{i+1}(T) \circ \dots \circ F_k(T)$ does not depend on the
parameters as well). Otherwise the family is \emph{reducible}.
\end{defn}
Thus, the family \eqref{Landau_ex} is reducible. However, the
product of the first two factors does not depend on the parameter,
while the factors themselves do. So we have an example of a
second-order irreducible family of factorizations.
\begin{rem} \label{number_of_factors}
Note that any irreducible family of the type $(S_1)(S_2)(S_3)$
serves as an irreducible family of the types $(S_1 S_2)(S_3)$ and
$(S_1)(S_2 S_3)$ as well. Indeed, the irreducibility of the family
of the type $(S_1)(S_2)(S_3)$ means that the product of the first
and the second factors, as well as that of the second one and the
third one, depends on the parameter.

Analogous property enjoys the families of arbitrary orders.
\end{rem}
\begin{thm} \cite{GS} \label{GS}
 Let $L \in K[D]$, and $\Sym_L=S_1 \dots S_k$, and let the $S_i$ be pairwise coprime.
There is at most one factorization of $L$ of the type
$(S_1) \dots (S_k)$.
\end{thm}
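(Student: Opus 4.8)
The plan is to prove uniqueness by induction on the number of factors $k$, reducing everything to the case $k=2$, which carries all the real content. For the inductive step I would group the factorization as $L = F_1 \circ (F_2 \circ \dots \circ F_k)$. Since $K[X_1,\dots,X_n]$ is a unique factorization domain and $S_1$ is coprime to each of $S_2,\dots,S_k$, it is coprime to their product $S_2 \cdots S_k$, which is the symbol of $F_2 \circ \dots \circ F_k$. Granting the two-factor statement, both $F_1$ and the product $F_2 \circ \dots \circ F_k$ are then determined uniquely by $L$; applying the induction hypothesis to the latter (whose symbol factors into the pairwise coprime $S_2,\dots,S_k$) completes the reduction, so it suffices to settle $k=2$.

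For the two-factor case, suppose $L = A \circ B = A' \circ B'$ with $\Sym_A = \Sym_{A'} = S$ and $\Sym_B = \Sym_{B'} = T$, where $\gcd(S,T)=1$, and set $p = \deg S$, $q = \deg T$. Introduce the differences $P = A - A'$ and $Q = B - B'$. Since the corresponding symbols agree, $\Sym_P$ and $\Sym_Q$ vanish, so either $P=0$ or $\ord P \le p-1$, and likewise for $Q$. Expanding $A' \circ B' = (A-P)\circ(B-Q)$ and equating with $A \circ B$ yields the identity
\[
A \circ Q + P \circ B = P \circ Q .
\]
The goal is to show that this forces $P = Q = 0$, whence $A=A'$ and $B=B'$.

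If exactly one of $P,Q$ vanishes, the identity reduces to $A \circ Q = 0$ or $P \circ B = 0$; since $\Sym_{A\circ B}=\Sym_A\,\Sym_B$ shows that $K[D]$ is an integral domain and $A,B\ne 0$, this gives $Q=0$ or $P=0$, a contradiction. So assume $P,Q\ne 0$, with $\ord P = p'\le p-1$, $\ord Q = q'\le q-1$, and leading symbols $\sigma_P,\sigma_Q$. The right-hand side $P\circ Q$ has order at most $p'+q'$, which is strictly smaller than both $p+q'$ and $p'+q$, the orders of the two summands on the left. Comparing the top-order homogeneous parts splits into three cases according to the sign of $(p+q')-(p'+q)$: in the two unbalanced cases the surviving leading symbol is $S\,\sigma_Q$ or $\sigma_P\,T$, which cannot vanish, a contradiction; in the balanced case $p+q'=p'+q$ the top part of the left-hand side is $S\,\sigma_Q+\sigma_P\,T$ and must vanish. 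This last relation is the crux: from $S\,\sigma_Q = -\,\sigma_P\,T$ and $\gcd(S,T)=1$ one obtains $T \mid \sigma_Q$, which is impossible since $0 \le \deg \sigma_Q = q' \le q-1 < \deg T$. I expect this balanced case — where the coprimality of $S$ and $T$ is combined with the degree bound on $\sigma_Q$ — to be the main obstacle; the rest is bookkeeping on orders and the integral-domain property of $K[D]$.
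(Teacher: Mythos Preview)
Your argument is correct. Note, however, that the paper does not supply its own proof of this statement: Theorem~\ref{GS} is quoted from Grigoriev and Schwarz \cite{GS} and used as a black box throughout, so there is no in-paper proof to compare against.

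For what it is worth, your approach is the natural one. The induction reducing $k$ factors to two is straightforward, and in the two-factor case the identity $A\circ Q + P\circ B = P\circ Q$ together with the order count $p'+q' < \min(p+q',\,p'+q)$ does force the leading symbols to collide. The only place where the hypothesis $\gcd(S,T)=1$ actually enters is the balanced case $p+q'=p'+q$, where $S\,\sigma_Q = -\,\sigma_P\,T$ combined with $\deg\sigma_Q < \deg T$ yields the contradiction; you have identified that correctly as the crux. One small stylistic point: the separate treatment of ``exactly one of $P,Q$ vanishes'' is unnecessary, since if, say, $P=0$ then $A\circ Q=0$ follows directly from the displayed identity and the domain property gives $Q=0$ without any case split --- but this is cosmetic, not a gap.
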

The theorem implies that, for instance, there are no irreducible
families of the types $(X)(Y^3)$ or $(X^2)(Y^2)$.
\begin{rem}
The properties of factorizations, such as the existence of the
factorizations,
or the number of parameters, or again the number of variables in
parametric functions, are invariant under a change of variables and
the gauge transformations $L \mapsto g^{-1} L g$, $g \in K$, of the
initial operator.
\end{rem}
\begin{defn} We say that a partial differential operator $L \in K[D]$
is \emph{almost ordinary} if it is an ordinary differential operator
in some system of coordinates (transformation's functions belong to
$K$).
\end{defn}
\section{The Linearized Problem}

The basic tool in our study of families of factorizations will be
their \emph{linearization}. Let an operator $L \in K[D]$ have a
family of factorizations
\[
L=M_1(T) \circ M_2(T)\ ,
\]
parameterized by some parameters $T=(t_1, \dots, t_k)$, with
$M_1(T), M_2(T) \in K[D]$. By means of a multiplication by a
function from $K$, one can make the symbols of $M_1(T)$ and $M_2(T)$
independent of the parameters. Take some point $T_0$ as an initial
point, make the substitution $T \rightarrow T_0 + \varepsilon R$,
and equates the coefficient at the power $\varepsilon$. This implies
\begin{equation} \label{lian}
F_1 \circ L_2 + L_1 \circ F_2 =0 \ .
\end{equation}
where we have denoted the initial factorization factors by $L_i=M_i(T_0)$,
and $F_i=F_i(R)$ for $i=1,2$.
Analogously, for factorizations into three factors we get
\begin{equation} \label{lian3}
F_1 \circ L_2 \circ L_3 + L_1 \circ F_2 \circ L_3 + L_1 \circ L_2 \circ F_3 =0 \ .
\end{equation}

In the paper we apply the linearization to obtain some important
information about families of factorizations.
\section{Second-Order Operators}
\begin{thm} \label{order2}
A second-order operator in $K[D_x,D_y]$ has a family of
factorizations (in some extension of the field $K$) if and only if
it is almost ordinary. Any such family is unique for a given
operator. Further, in appropriate variables it has the form
\[
\left(D_x+ a + \frac{Q}{W+f_1(y)} \right) \circ \left(D_x + b -
\frac{Q}{W+f_1(y)} \right)\ ,
\]
where $Q=e^{\int (b-a) dx}$, $W=\int Q dx$, $a,b \in K$, and $f_1(y)
\in K$ is a parameter.
\end{thm}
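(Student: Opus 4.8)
The plan is to exploit the linearization machinery from Section~3 together with the uniqueness provided by Theorem~\ref{GS}. First I would reduce to a normal form: since the symbol is completely factorable and a second-order operator with a \emph{hyperbolic} symbol (two distinct factors) has its factorizations uniquely determined and hence admits no genuine parameter, the only way to get a nontrivial family is when the symbol is a perfect square, $\Sym_L = S^2$. After a change of variables (permissible by the invariance remark) I can take $S = X$, so that every factor in a family of type $(X)(X)$ has the shape $D_x + \alpha D_y + \cdots$; matching the $D_y$-symbol forces the factorization to be of type $(X)(X)$ with both factors free of $D_y$ in their symbol, i.e. the operator is ordinary in $x$ up to lower-order $D_y$ terms, which is precisely the \emph{almost ordinary} condition. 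I would make this rigorous by writing a generic second-order $L$ with symbol $X^2$, positing a one-parameter family $L=M_1(T)\circ M_2(T)$, and showing the $D_y$-coefficients must vanish.

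Next I would linearize. Writing the two first-order factors as $L_1 = D_x + a + p$ and $L_2 = D_x + b - p$ for the concrete value $T_0$ (having absorbed constants so that $L_1\circ L_2 = L$), and taking the $\varepsilon$-derivative along the family, I get equation~\eqref{lian}: $F_1\circ L_2 + L_1\circ F_2 = 0$, where $F_1,F_2$ are first-order (in fact zeroth-order in their symbol, since the symbols are held fixed) operators. Because the symbols are parameter-independent, $F_1$ and $F_2$ are multiplication-type perturbations $F_1 = \varphi$, $F_2 = \psi$ with $\varphi,\psi\in K$. Expanding \eqref{lian} and collecting by order in $D_x$ yields, at top order, $\varphi + \psi = 0$, and at the next order a first-order linear ODE in $x$ for $\varphi$ of Riccati/linear type. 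Solving it is where the stated closed form comes from.

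The key computation is to integrate that linear ODE. Setting $\psi = -\varphi$, the surviving relation reads (schematically) $\varphi_x = (\text{something involving } b-a)\,\varphi + \varphi^2$-type terms; but since we work with the linearized equation the quadratic terms drop and we are left with a genuinely linear inhomogeneous first-order ODE in $x$ whose integrating factor is exactly $Q = e^{\int (b-a)\,dx}$. Integrating once produces $W = \int Q\,dx$, and the general solution introduces one constant of integration that is constant in $x$ but arbitrary in $y$, namely $f_1(y)\in K$. Substituting back gives the perturbation direction $\pm Q/(W+f_1(y))$, which integrates up from the infinitesimal family to the exact family displayed in the theorem; one then checks directly that $\bigl(D_x + a + \tfrac{Q}{W+f_1(y)}\bigr)\circ\bigl(D_x + b - \tfrac{Q}{W+f_1(y)}\bigr)$ reproduces $L$ for every $f_1$.

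Finally, uniqueness of the family (as opposed to uniqueness of the factorization, which Theorem~\ref{GS} denies in the non-coprime case) follows because the linearized equation~\eqref{lian} is a \emph{linear} ODE of first order in the single unknown $\varphi$, so its solution space modulo the trivial (reducible) direction is one-dimensional; this both pins down the parametrization by the single univariate function $f_1(y)$ and shows no other essentially different family exists. The main obstacle I anticipate is not the ODE integration itself but the bookkeeping that reduces the two-variable operator problem to this one-variable ODE: one must argue carefully that the $D_y$-dependence genuinely disappears (the almost-ordinary reduction) and that the only surviving freedom is the $x$-integration constant $f_1(y)$, ruling out families that depend on the parameter in a more intertwined, non-integrable way. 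Establishing that the \emph{almost ordinary} property is both necessary and sufficient is the crux, with the explicit formula then falling out of the linearized analysis.
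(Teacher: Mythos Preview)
Your overall strategy matches the paper's: reduce the symbol to $XY$ or $X^2$, dispose of the first case via Theorem~\ref{GS}, observe that a factorization of type $(X)(X)$ forces the operator to be ordinary in $x$ (hence almost ordinary), and then use the linearized equation~\eqref{lian} to see that the tangent space to the family is one-dimensional, parameterized by a single function $f_1(y)$.

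There is, however, a confusion in your derivation of the explicit formula. With $L_1=D_x+a$, $L_2=D_x+b$, $F_1=\varphi$, $F_2=\psi$, equation~\eqref{lian} gives $\psi=-\varphi$ and then the \emph{homogeneous} ODE $\varphi_x=(b-a)\varphi$, whose general solution is $\varphi=f_1(y)\,Q$ with $Q=e^{\int(b-a)\,dx}$; no $W$ appears at this stage, and the equation is not inhomogeneous. The quantity $W=\int Q\,dx$ enters only when one solves the \emph{full} nonlinear problem: writing an arbitrary factorization as $(D_x+a+g)\circ(D_x+b-g)$ and equating to $L$ yields the Riccati equation $g_x=(b-a)g-g^2$, which under the substitution $u=1/g$ becomes the genuinely inhomogeneous linear ODE $u_x+(b-a)u=1$; its integrating factor $Q$ then gives $Qu=W+f_1(y)$, hence $g=Q/(W+f_1(y))$. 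You have conflated two distinct ``linearizations'': the paper's linearization of the factorization family (an infinitesimal tangent computation) and the standard Bernoulli linearization of a Riccati ODE (an exact change of dependent variable). The paper itself does not carry out this Riccati step either; it simply asserts the formula and invites direct verification, so your attempt to derive it goes beyond what the paper does, but the derivation as written needs this correction. Once that is fixed, your uniqueness argument (one-dimensionality of the solution space of~\eqref{lian}) is exactly what the paper has in mind.
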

\begin{proof}
Consider a second-order operator $L \in K[D_x,D_y]$. By a change of
variables we can make the symbol of $L$ equal to either $X^2$ or $XY$.
In the latter case, $L$ has no family of factorizations because
of Theorem \ref{GS}.

Consider the case $\Sym_L=X^2$. Then operator $L$ has a factorization
only if it is ordinary.
Suppose we know one factorization: $L=L_1 \circ
L_2= (D_x+ a)\circ (D_x + b)$, where $a,b \in K$, and we are
interested in deciding whether there exists a family.
Consider the linearized
problem, that is the equation (\ref{lian}) w.r.t. $ F_1,F_2 \in K$:
$F_1 \circ L_2 + L_1 \circ F_2 =0$. The equation always has a
solution
\[
\left\{
  \begin{array}{ll}
    F_1= f_1(y)e^{(b-a)x}, \\
    F_2=-F_1,
  \end{array}
\right.
\]
where $f_1(y) \in K$ is a parameter function. Thus, any family can
be parameterized by only one function of one variable.

In fact, such a family always exists, and it is given explicitly in
the statement of the theorem. Moreover, one can prove
straightforwardly that such a family is unique for a given
operator $L$.
\end{proof}
\section{Third-Order Operators}

\begin{thm} \label{order3}
Let a third-order operator in $K[D_x,D_y]$ with the completely
factorable symbol has an irreducible family of factorizations. Then
it is almost ordinary.

Any such family depends by at most three (two) parameters if the
number of factors in factorizations is three (two). Each of these
parameters is a function of one variable.
\end{thm}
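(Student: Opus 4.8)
The plan is to mimic the second-order argument but account for the richer structure coming from three first-order factors. By Theorem~\ref{GS}, an irreducible family can only occur when the symbol has a repeated factor, so after a change of variables I would reduce to the case $\Sym_L = X^3$ (the mixed cases such as $X^2 Y$ are excluded by coprimality, as noted after Theorem~\ref{GS}, via Remark~\ref{number_of_factors}). The first key step is therefore a normalization: show that an irreducible family forces the symbol to be a pure cube and, by the gauge/coordinate invariance remark, that $L$ must in fact be an ordinary operator in the distinguished variable, i.e.\ almost ordinary. This handles the first assertion of the theorem.

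For the counting of parameters I would pass to the linearized equation. For a factorization into three factors $L = L_1 \circ L_2 \circ L_3$ with each $L_i = D_x + a_i$, the linearization is equation~\eqref{lian3}, $F_1 \circ L_2 \circ L_3 + L_1 \circ F_2 \circ L_3 + L_1 \circ L_2 \circ F_3 = 0$, where the $F_i \in K$ are scalar (the symbols having been fixed). The plan is to solve this linear system for $F_1, F_2, F_3$ and read off the dimension of the solution space. As in the order-two case, each $F_i$ satisfies a first-order ODE in $x$ whose general solution carries one arbitrary function of $y$; with three factors this yields at most three such functions, each univariate. For the two-factor type $(S_1)(S_2)$ of a third-order operator (so one factor is first order and one is second order), I would instead use the linearization~\eqref{lian} and count two arbitrary univariate functions. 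Here Remark~\ref{number_of_factors} is the bridge: an irreducible three-factor family also serves as an irreducible two-factor family, so the parameter counts must be organized consistently across the groupings.

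The main obstacle I expect is that the linearization only bounds the \emph{infinitesimal} deformations, i.e.\ the tangent space to the family at a point; it does not by itself guarantee that every infinitesimal deformation integrates to an honest family member lying in $K[D]$, nor that distinct arbitrary functions give genuinely independent (non-gauge, irreducible) parameters. So the delicate part is twofold: first, verifying that the parameters surviving after imposing irreducibility are exactly the univariate functions counted by the linearized system and not fewer; and second, checking that the sub-products $L_1 \circ L_2$ and $L_2 \circ L_3$ genuinely depend on the parameters (this is precisely the irreducibility condition and is what distinguishes the Landau-type phenomenon from the trivial reducible families). I would resolve this by arguing that, because the coefficients live in $K[D_x]$ over a field with the $y$-derivation acting, integrating the first-order $x$-ODEs introduces precisely one constant of integration per factor, namely a free element of $\ker \partial_x$, which is a function of $y$ alone.

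Once the normalization to $\Sym_L = X^3$ (respectively the ordinary reduction) is in place, the remaining work is the explicit solution of~\eqref{lian3} and~\eqref{lian}, which is routine and parallels the second-order computation; the substance of the theorem is the combination of Theorem~\ref{GS} (forcing the repeated symbol), the invariance remark (forcing almost-ordinariness), and the linearized parameter count (bounding the number of univariate functions by the number of factors).
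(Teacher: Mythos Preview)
There is a genuine gap in your treatment of the mixed-symbol case. You assert that ``the mixed cases such as $X^2Y$ are excluded by coprimality, as noted after Theorem~\ref{GS}, via Remark~\ref{number_of_factors}.'' This is incorrect: Theorem~\ref{GS} rules out only factorization types whose symbol factors are \emph{pairwise coprime}, such as $(X^2)(Y)$ or $(Y)(X^2)$; it says nothing about the types $(X)(XY)$ and $(XY)(X)$, whose symbol factors share the common factor $X$. The paper does not dismiss this case --- it carries out a full analysis. One sets up $L_1\circ L_2=(D_x+r)\circ(D_{xy}+aD_x+bD_y+c)$, gauges $a=0$, solves the linearized equation \eqref{lian}, and finds that a nontrivial solution forces $c=0$. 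This means the initial second factor itself splits as $(D_x+b)\circ D_y$. One then checks that \emph{every} member of the family has its second factor of the form $(D_x+\widetilde b)\circ D_y$ with the rightmost $D_y$ parameter-free, so the family is reducible. Without this argument you have not shown that an irreducible family forces $\Sym_L=X^3$, and hence have not established almost-ordinariness.

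Two smaller points. First, even once you are in the $X^3$ case, almost-ordinariness is not a consequence of the invariance remark alone: it comes from the linearized equation for the type $(X)(X^2)$, which forces the coefficient of $D_y$ in the second factor to vanish (the condition $b=0$ in the paper), so that $L$ is genuinely ordinary in $x$. You need to exhibit that condition, not invoke invariance abstractly. Second, your parameter count for three factors (``each $F_i$ satisfies a first-order ODE in $x$, giving three univariate functions'') is not how the paper argues and is not obviously correct: \eqref{lian3} is a single operator identity that, after eliminating $F_1+F_2+F_3=0$, leaves coupled conditions rather than three independent first-order ODEs. The paper instead obtains two parameters from the second-order ODE governing the $(X)(X^2)$ linearization, and then at most one additional parameter from Theorem~\ref{order2} applied to the last two first-order factors, yielding $2+1=3$.
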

\begin{proof}
Consider a third-order operator $L$ in $K[D_x,D_y]$. For the symbol
$\Sym_L$ only the following three are possible: it has exactly
three, two, or no coprime factors. In the first case no family is
possible because of Theorem \ref{GS}.

Suppose exactly two factors of the symbol are coprime. Thus, in some
variables the symbol of $L$ is $X^2Y$. Consider factorization into
two factors. Then the following types of factorizations are
possible: $(X)(XY), (Y)(X^2), (XY)(X), (X^2)(Y)$. By Theorem
\ref{GS}, there is no family of factorizations of the types
$(Y)(X^2), (X^2)(Y)$. Because of the symmetry, it is enough to
consider just the case $(X)(XY)$. Indeed, if there exists a family
of the type $(XY)(X)$ for some operator $L$ of the general form
\eqref{general form}. Then the adjoint operator
\[
L^t (f) = \sum_{|J| \leq d} (-1)^{|J|} D^J (a_J f).
\]
has a family of the type $(X)(XY)$, and the number of parameters in
the family is the same.

 Thus, we consider a factorization of the
factorization type $(X)(XY)$:
\[
L=L_1 \circ L_2 = (D_x + r) \circ (D_{xy} + a D_x + b D_y +c),
\]
where $r,a,b,c \in K$ as the initial factorization for some family
of factorizations of the factorization type $(X)(XY)$. By means of
the gauge transformations, we make the coefficient $a$ equal zero in
this initial factorization (of course, the coefficient at $D_x$ in
the second factor of other factorizations of the family may be still
non-zero). To study possible families in this case, we consider the
linearized problem: the equation $F_1 \circ L_2 + L_1 \circ F_2 =0$
w.r.t. $F_1=r_1$, $F_2=a_{10} D_x + a_{01} D_y + a_{00}$, where
$r_1, a_{10}, a_{01}, a_{00} \in K$. The only non-trivial solution
is
\[
a_{10}=a_{00}=0, \ r_1=-a_{01}, \ a_{01}= f_1(y) \cdot Q,
\]
where $Q=e^{\int (b-r) dx}$ and $f_1(y) \in K$ is a parameter, while
\[
c=0
\]
is a necessary condition of the solution's existence. Therefore,
every family of the type $(X)(XY)$ is parameterized by one function
of one parameter (can be a constant function). Secondly, the initial
factorization has the form
\begin{equation} \label{(X)(XY)2}
L=( D_x + r ) \circ (D_x + b ) \circ D_y \ ,
\end{equation}
that is the operator $L$ itself has very special form.

Now, if we consider a factorization of the family in general form,
namely
\[
\widetilde{L}_1 \circ \widetilde{L}_2 = (D_x + \widetilde{r}) \circ
(D_{xy} + \widetilde{a} D_x + \widetilde{b} D_y +\widetilde{c}) \ ,
\]
where all the coefficients belong to $K$, and equates the
corresponding product to the expression \eqref{(X)(XY)2}, we obtain
\[
\widetilde{a}=\widetilde{c}=0,
\]
and so any factorization of the family has the form
\[
L=( D_x + \widetilde{r} ) \circ (D_x + \widetilde{b} ) \circ D_y \ .
\]
Therefore, only reducible families of factorizations into two
factors may exist in this case. Then, by Remark
\ref{number_of_factors}, there is no any irreducible family of
factorizations into any number of factors in this case.

Consider the case in which all the factors of the symbol $\Sym_L$
are the same (up to a multiplicative function from $K$). Then one
can find variables in which the symbol is $X^3$. Note that any
irreducible factorization of the factorization type $(X)(X)(X)$ is
an irreducible factorization of the types $(X)(X^2)$ and $(X^2)(X)$
also. Then because of the symmetry only one of two types $(X)(X^2)$
and $(X^2)(X)$ has to be considered. Therefore, it is sufficient to
consider the factorization type $(X)(X^2)$. Thus, consider an
initial factorization  of the form
\[
L=L_1 \circ L_2=(D_x + r) \circ (D_{xx} + a D_x + b D_y + c)
\]
where $r,a,b,c \in K.$ Under the gauge transformations we may assume
$a=0$ (while the coefficient at $D_x$ in the second factor of other
factorizations of the family may be still non-zero). Consider the
linearized problem \eqref{lian} for such the initial factorization:
the equation $F_1 \circ L_2 + L_1 \circ F_2 =0$ w.r.t. $F_1=r_1$,
$F_2=a_{10} D_x + a_{01} D_y + a_{00}$, where $r_1, a_{10}, a_{01},
a_{00} \in K$. The only non-trivial solution is $a_{01}=0$,
$r_1=-a_{10}$, $a_{00}=-ra_{10}-\partial_x(a_{10})$, provided both
\[
b=0
\]
and $c a_{10} + r^2 a_{10} + 2r \partial_x(a_{10})+a_{10}
\partial_x(r)+ \partial_{xx}(a_{10})=0$. The solution of the latter equation depends
on two arbitrary function in the variable $y$. Therefore, any family
of the type $(X)(XX)$ is parameterized by two functions of one
variable (can be constant functions), and such a family may exist
only for an \emph{almost ordinary} operator $L$. This implies that a
family of the factorization type $(X)(X)(X)$ may exist only for an
\emph{almost ordinary} operator $L$.

Any irreducible family of the type $(X)(X)(X)$ serves as an
irreducible family of the type $(X)(XX)$. Therefore, a family of the
type $(X)(X)(X)$ can have two parameters (functions in one
variables), that appear in the corresponding family of the type
$(X)(XX)$, and additional parameters, that can appear when we
consider two last factors separately. By the theorem \ref{order2},
there is at most one additional parameter (a function in one
variable). Thus, for the family of the type $(X)(X)(X)$ the maximal
number of parameters is three, and these parameters are functions in
one variable (may be constant functions). This agrees with
\cite{tsarev98}.
\end{proof}

The theorem implies that for an operator (with the completely
factorable symbol), that is not almost ordinary, only reducible
families may exist. Any such family is obtained by the
multiplication (on the left or on the right) of a second-order
family by some non-parametric first order operator. Note that this
second-order family should be almost ordinary, by Theorem
\ref{order2}.

\begin{ex} The family of the Landau operator \eqref{Landau_ex}
is reducible, which is obtained from a second-order family.
\end{ex}
%
%
\section{Fourth-Order Operators}

Here we start with an example of a fourth-order irreducible family for
an almost ordinary operator.
\begin{ex} \label{example_xx_xx}
The following is a fourth-order irreducible
family of factorizations:
\[
D_{xxxx}=\Big(D_{xx} + \frac{2}{x + 2 f_1(y)} +y \Big) \Big( D_{xx}
- \frac{2}{x + 2 f_1(y)} +y \Big),
\]
where $f_1(y) \in K$ is a parameter.
\end{ex}

Unlike the irreducible families of orders two and three, an
irreducible fourth-order family need not be almost ordinary.

\begin{ex} \label{example_xy_xy}
The following is a fourth-order irreducible
family of factorizations:
\[
D_{xxyy}=\Big(D_x + \frac{\alpha}{y + \alpha x + \beta} \Big) \Big(
D_y + \frac{1}{y + \alpha x + \beta} \Big) \Big(D_{xy} - \frac{1}{y
+ \alpha x + \beta} (D_x + \alpha D_y) \Big),
\]
where $\alpha, \beta \in K \backslash \{ 0 \}$. Note that the first
two factors commute.

Again, we actually have several examples here. Namely, for the same
operator $D_{xxyy}$, we have families of the types $(X)(XY^2)$,
$(XY)(XY)$ and $(X)(Y)(XY)$.
\end{ex}

\begin{thm} In $K[D_x,D_y]$, irreducible fourth order families of
factorizations with a completely factorable symbol exist only
if in some system of coordinates their factorization types are
$(XY)(XY)$, $(X)(XY^2)$, $(X)(Y)(XY)$ or $(X^2)(X^2)$,
and symmetric to them.
\end{thm}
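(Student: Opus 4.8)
The plan is to enumerate all possible factorization types for a fourth-order operator with completely factorable symbol, then eliminate every type that cannot support an irreducible family. By a change of variables the symbol becomes one of $X^4$, $X^3Y$, $X^2Y^2$, $X^2YZ$-type (but we are on the plane, so only two derivations), hence effectively $X^4$, $X^3Y$, $X^2Y^2$; the completely factorable hypothesis rules out symbols with an irreducible quadratic factor. For each such symbol I would list the admissible factorization types into two, three, and four factors whose symbol factors multiply to the given symbol. The strategy is then to apply Theorem~\ref{GS} and Remark~\ref{number_of_factors} as filtering tools, together with a reduction via the adjoint (as used in the third-order proof) to collapse symmetric types, leaving only a short list of candidates to examine directly.

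First I would dispose of types containing pairwise coprime blocks: by Theorem~\ref{GS} any factorization type of the form $(S_1)(S_2)$ with $S_1,S_2$ coprime is unique, hence admits no family, and by Remark~\ref{number_of_factors} a type with more factors that can be \emph{grouped} into such a coprime pair cannot carry an irreducible family either. This immediately kills, for the symbol $X^3Y$, every type except those in which the lone $Y$ never sits in a coprime block — and one checks these all reduce, exactly as in the third-order argument for $X^2Y$, since the $Y$-factor forces a $D_y$ on the right and the family becomes reducible. For the symbol $X^2Y^2$ the surviving candidates after the coprimality filter are precisely $(XY)(XY)$, $(X)(XY^2)$ and $(X)(Y)(XY)$ (and their symmetric images under $X\leftrightarrow Y$ and under passing to the adjoint), which Example~\ref{example_xy_xy} shows are genuinely realized. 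For the symbol $X^4$ the candidates are $(X^2)(X^2)$, $(X)(X^3)$, $(X)(X)(X^2)$ and $(X)^4$, realized in part by Example~\ref{example_xx_xx}.

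The main obstacle will be the cases where the coprimality filter gives nothing, namely the types built entirely from copies of $X$ (for symbol $X^4$) and the mixed types like $(X)(X^3)$, $(X)(X)(X^2)$. Here I would linearize as in Section~3: write an initial factorization with the leading coefficient of each factor normalized by a gauge transformation, and solve the linear equation~\eqref{lian} or~\eqref{lian3} for the perturbation operators $F_i$. The goal is to show that for every such type except $(X^2)(X^2)$ the linearized system forces either a trivial solution or one in which an initial product $F_1\circ\dots\circ F_i$ vanishes, i.e.\ reducibility; the delicate point is to organize the coefficient comparison by order so that the compatibility conditions (the analogues of $b=0$, $c=0$ in the third-order proof) can be read off and shown to pin the family down to a reducible one. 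The type $(X^2)(X^2)$ is the exceptional survivor, and I would confirm its irreducibility directly from Example~\ref{example_xx_xx} rather than from the linearized obstruction.

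Finally I would assemble the verdict: after the above eliminations the only factorization types admitting an irreducible fourth-order family are $(XY)(XY)$, $(X)(XY^2)$, $(X)(Y)(XY)$ and $(X^2)(X^2)$, together with those obtained from them by symmetry (swapping the two variables and passing to the adjoint, both of which preserve the existence and parameter count of families by the earlier remarks). This matches exactly the statement of the theorem, and the two examples at the head of this section certify that each listed type is nonempty.
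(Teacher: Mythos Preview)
Your classification of symbols is incomplete, and this is a genuine gap. On the plane a completely factorable quartic form in $X,Y$ can have one, two, three, or four distinct linear factors; a change of variables normalizes at most two of them, not all. You correctly list $X^4$, $X^3Y$, $X^2Y^2$, but the ``$X^2YZ$-type'' you discard does occur in two variables as $X^2Y(\alpha X+Y)$ with $\alpha\neq 0$ (multiplicity pattern $(2,1,1)$), and there is also the pattern $(1,1,1,1)$. The latter is indeed killed by Theorem~\ref{GS}, but the former is not: for the symbol $S_1^2S_2S_3$ the types $(S_1S_2)(S_1S_3)$ and $(S_1)(S_1S_2S_3)$ have non-coprime blocks and survive your coprimality filter. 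The paper treats these as its Case~I and needs a full linearization-plus-return-to-original argument to show that any family of either type is forced to split off a parameter-free first-order factor on the outside, hence is reducible. Your plan never reaches this case.

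There is a second omission even within the symbol $X^2Y^2$: the three-factor type $(X)(XY)(Y)$ is not symmetric (via adjoint or $X\leftrightarrow Y$) to $(X)(Y)(XY)$, and it also survives the coprimality filter, yet you do not list it among the survivors. The paper handles it separately (Case~II.b) by showing the middle factor is forced to factor as $(D_x+b)(D_y+a)$ and that the resulting four-factor family has a parameter-free left half. Finally, your description of the mechanism for eliminating $(X)(X^3)$ and the $X^3Y$ types is not accurate: in those cases the linearized problem \emph{does} admit nontrivial solutions, and reducibility is only seen after substituting the compatibility conditions back into the full (nonlinear) factorization equations and observing that a boundary factor is parameter-free. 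The linearization gives necessary conditions, not the verdict.
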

\begin{proof} For the symbol $\Sym$ of the family,
 there are exactly four possibilities: to have exactly four ($\Sym=S_1S_2S_3S_4$),
 three ($\Sym=S_1^2S_2S_3$), two ($\Sym=S_1^2S_2^2$ and $\Sym=S_1S_2^3$), or no ($\Sym=S_1^4$) different factors.

 Consider factorizations into two factors first. Because of properties of factorizations
 of adjoint operators (see in more detail in the proof of Theorem \ref{order3}), it is enough to consider
 factorization type $(S_i)(S_j)$ instead of consideration of both $(S_i)(S_j)$, $(S_j)(S_i)$.
 Also, recall that by Theorem \ref{GS}, no family of type $(S_1)(S_2)$, where $S_1, S_2$ are coprime
 exists. Thus, it is enough to consider the following cases:
\begin{itemize}
  \item[I.] $\Sym=S_1^2S_2S_3$.
  \begin{itemize}
    \item[1)] $(S_1S_2)(S_1S_3)$.
    \item[2)] $(S_1)(S_1S_2S_3)$.
  \end{itemize}
  \item[II.] $\Sym=S_1^2S_2^2$.
  \begin{itemize}
    \item[1)] $(S_1S_2)(S_1S_2)$.
    \item[2)] $(S_1)(S_1S_2^2)$.
  \end{itemize}
  \item[III.] $\Sym=S_1S_2^3$.
  \begin{itemize}
    \item[1)] $(S_1S_2)(S_2^2)$.
    \item[3)] $(S_2)(S_1S_2^2)$.
  \end{itemize}
  \item[IV.] $\Sym=S_1^4$.
    \begin{itemize}
    \item[1)] $(S_1)(S_1^3)$.
    \item[3)] $(S_1^2)(S_1^2)$.
  \end{itemize}
\end{itemize}
\smallskip
\noindent
 $I$. Case $\Sym=S_1^2S_2S_3$. In this case, in
 appropriate variables, the symbol has the form $\Sym_L=X^2 Y(\alpha
 X + Y)$, where $\alpha \in K \backslash \{0 \}$.

  1) Case of the type $(XY)(X(\alpha X + Y))$. We prove that there
is no irreducible family of this type. Let $
L_1 \circ L_2 = (D_{xy} + a_1 D_x + b_1 D_y + c_1) \circ ( \alpha
D_{xx}+ D_{xy} + a_2 D_x + b_2D_y + c_2)$, where all coefficients are in $K$, be the initial factorization of
such a family. Then under the gauge transformation we may assume
$b_2=0$ (while the coefficient at $D_y$ in the second factor of any
other factorization of the family may be still non-zero). Consider
the linearized problem: the equation $F_1 \circ L_2 + L_1 \circ F_2
=0$ w.r.t. $F_1, F_2 \in K[D]$ and $\ord(F_1)=\ord(F_2)=1$. The only
non-trivial solution is parameterized by a function $f_1(y) \in K$
and exists only under two conditions on the coefficients of $L_1$
and $L_2$: $c_1= \partial_y(b_1)+ b_1 a_1$ and $c_2=\partial_{xx}(\alpha)- \partial_x(a_2)$.
Now we come back to the initial problem and look for a family of
factorizations in the general form: $L_1 \circ L_2 = (L_1 + S_1) \circ (L_2 + S_2)$,
where $S_1, S_2$ are arbitrary first-order operators in $K[D]$. This
gives us a system of equations in the coefficients of $S_1$ and
$S_2$. The system together with conditions
$c_1= \partial_y(b_1)+ b_1 a_1$ and $c_2=\partial_{xx}(\alpha)- \partial_x(a_2)$ has a
unique non-trivial solution. The corresponding (to this solution)
family of factorizations is complete, that is, both factors are
factorable themselves: $L_1 \circ L_2 = (D_y + a_1 ) \circ \Big(D_x +
b_1+\frac{Q}{W+f_1(y)} \Big)\! \circ \! \Big(D_x - \frac{Q}{W+f_1(y)}
\Big)\! \circ \! \Big( \alpha D_x + D_y + a_2-\partial_x(\alpha) \Big)$,
where $Q=e^{-\int b_1 dx}$ and $W=\int Q dx$ and $f_1(y)$ is the
only parameter function.
Now it is clear that the first and the
last factors do not depend on a parameter, and so any factorization
of any family of the type $(XY)(X(\alpha X + Y))$ is reducible.

 2) Case of the type $(S_1)(S_1S_2S_3)$. We prove now that there is no irreducible family
of this type. By a change of variables one can make  $S_1=X$, $S_2=Y$, $S_3=\alpha X+Y$, where $\alpha \in K$.
 Then consider a factorization of type $(S_1)(S_1S_2S_3)$,
\[
L_1 \circ L_2 = (D_x + c_1) \circ ( D_{xxy} + D_{xyy} + a D_{xx} + b
D_{xy} + c D_{yy} + d D_x + e D_y +f),
\]
where all the coefficients belong to $K$, as the initial
factorization of a family of factorizations. Under the gauge
transformations we may assume $c=0$ (note that the analogous
coefficients in the other factorizations of the family do not
necessary become zero). Proceeding as in the previous case, we also
get that there is only one non-trivial solution, which is
parameterized by a function $f_1(y) \in K$. Also we have two
conditions which provide the existence of such an equation:
$e_2 = b_2 c_2+\partial_x(b_2)-\alpha
\partial_x(c_2)-\alpha c_2^2-2 \partial_x(\alpha) c_2-\partial_{xx}(\alpha) $,
$f_2= d_2 c_2+\partial_x(d_2)-a_2
\partial_x(c_2)-a_2 c_2^2-2 \partial_x(a_2) c_2-\partial_{xx}(a_2)$.
Now, we use the obtained conditions for the initial problem, where a
family of factorizations is considered in general form. Thus, we get
that if such a family exists, then the second factor of the family
can be always factored into first and second-order operators, and
the second-order operator does not depend on the parameter:
$L_1 \circ L_2 = \Big(D_x + c_1 + m_{00} \Big) \circ \Big(D_x + c_2 - m_{00}\Big) \circ\\
  \Big( \alpha D_{x} + D_{y} + a_2 D_x + (b_2-\alpha
c_2-\partial_x(\alpha)) D_y + d_2-a_2 c_2- \partial_x(a_2) \Big)$,
 where only $m_{00} \in K$ may depend on a parameter. Thus any family
(if it exists) of the type $(X)(XY(\alpha X +Y))$ is reducible.

\smallskip \noindent
$II$. Case of symbol $\Sym=S_1^2S_2^2$.

  1) Case of the type $(S_1S_2)(S_1S_2)$. A family exists. See example \eqref{example_xy_xy}.

  2) Case of the type $(S_1)(S_1S_2^2)$. A family exists. See example \eqref{example_xy_xy}.

\smallskip \noindent
$III$. Case of symbol $\Sym=S_1S_2^3$.

  1) $(S_1S_2)(S_2^2)$.  In this case, in appropriate variables, the symbol has form $(XY)(Y^2)$. We prove that
  there is no irreducible family of this type. Indeed, consider a factorization of the considering type:
$L_1 \circ L_2 =(D_{xy} + a_1 D_x + b_1 D_y + c_1) \circ (D_{yy} +
a_2 D_x + b_2 D_y + c_2)$ with all coefficients in $K$, as the initial factorization of a
family of factorizations. By the gauge transformations we may assume
$c_2=0$. The linearized problem is the equation $F_1 \circ L_2 + L_1
\circ F_2 =0$ w.r.t. $F_1, F_2 \in K[D]$ and $\ord(F_1)=1, \
\ord(F_2)=1$. The equation has a non-trivial solution, which depends
on two parameter functions $f_1(x), f_2(x) \in K$, and the existence
is provided by the conditions $a_2=0$, $c_1=b_1 a_1 + \partial_x (a_1)$.
Thus, a family may exist only if the considered operator $L$ has the
form $L=(D_{xy} + a_1 D_x + b_1 D_y + b_1 a_1 + \partial_x (a_1)) \circ
(D_{y} + b_2) \circ  D_y$ for some $a_1,b_1,b_2 \in K$. Then, one may prove that in this case
any family of factorization has the form $ L=(D_{xy} + \dots ) \circ (D_{y}+ \dots) \circ D_y $,
meaning that is there is no irreducible fourth-order family of the type
$(XY)(Y^2)$.

  2) Case of type $(S_2)(S_1S_2^2)$. In this case, in
 appropriate variables, the symbol has form $(Y)(XY^2)$. We prove that there is no irreducible family
of this type. Indeed, consider a factorization of this type: $
L_1 \circ L_2 =(D_y + c_1) \circ (D_{xyy} +  a D_{xx} + b D_{xy} + c
D_{yy} + d D_x + e_1 D_y + f )$, with all coefficients in $K$, as the initial factorization of a
family of factorizations. By gauge transformations we may assume the
coefficient at $D_{yy}$ in this initial factorization is zero, that
is $c=0$. The linearized problem is the equation $F_1 \circ L_2 +
L_1 \circ F_2 =0$ w.r.t. $F_1, F_2 \in K[D]$ and $\ord(F_1)=0, \
\ord(F_2)=2$. This equation has a non-trivial solution, provided
$a=0$ and $d = e_1^{-2}(b f e_1 - b e_1 \partial_y(e_1) + e_1^2
\partial_y(b)+3f
\partial_y(e_1)- e_1 \partial_y(f)-2 (\partial_y(e_1))^2+ e_1
\partial_{yy}(e_1)-f^2)$. Then, when we consider the corresponding family of factorizations in
general form, we may apply these conditions, and easily get that
such a family cannot exist.

\smallskip
\noindent $IV$. Case of no different factors of the symbol. Then there
exist variables such that the symbol is $X^4$. Consider
factorizations into two factors. Then, by Theorem \ref{GS} and
because of the symmetry, it is enough to consider types of
factorizations $(X)(X^3)$ and $(X^2)(X^2)$.

1) Case of the type $(X)(X^3)$. Prove that there is no irreducible
family of the type  $(X)(X^3)$. Consider a factorization of the type
$(X)(X^3)$: $L_1 \circ L_2 =(D_{x} + c_1) \circ (D_{xxx} + a D_{xx}
+ b D_{xy} + c D_{yy} + d D_x + e D_y + f)$, where all coefficients are in $K$. Solving the linearized problem,
we get that a family in this case may be parameterized by only one function in one variable,
and such a family may exist provided two
conditions on the initial coefficients hold (one of them is just
$c=0$). Then, when we look for a family in general form, one may
prove that such families indeed can exist, but all such families are
reducible.

2) Case of the type $(X^2)(X^2)$. Here we have the Example
\ref{example_xx_xx} of a family of factorizations depended on one
functional parameter in one variable. In fact the maximal number of
parameters in this type of factorization is four \cite{tsarev98}.

\smallskip \noindent

Now consider factorizations into more than two factors. By Remark \ref{number_of_factors} and symmetry properties,
irreducible families into four and three factors cannot exist in cases $I$, $III$ and $IV$.
Consider parametric factorizations into three factors in case $II$,
$\Sym=S_1^2S_2^2$. It is enough to consider the following cases:
\begin{itemize}
  \item[a.] $(S_1)(S_2)(S_1S_2)$.
  \item[b.] $(S_1)(S_1 S_2)(S_2)$.
\end{itemize}

\noindent $a.$ A family exists. See example \eqref{example_xy_xy}.

\noindent $b.$ Prove that there is no irreducible family of type $(X)(XY)(Y)$ (in
appropriate variables, $S_1=X$ and $S_2=Y$). Consider an initial factorization of this
factorization type: $L=L_1 \circ L_2 \circ L_3 = (D_x+c_1)(D_{xy}+ a D_x + b D_y + c)(D_y + c_3)$,
where all the coefficients belong to $K$, and are known functions. Solving the linearized problem, i.e. \eqref{lian3},
we get $c=a_x + ab$. This means that $L_2=(D_x + b)\circ(D_y + a)$. As every factorization of a family
can be chosen as the initial, then the second factor is factorable for every factorization of the family.

Suppose there is another factorization of the same operator of the same factorization type, then
it has form $ L=M_1 \circ M_2 \circ M_3 \circ M_4= (D_x+m_1)(D_{x}+ m_{01})(D_y +  m_{01})(D_y + m_3)$,
where all the coefficients belong to $K$, and are some unknown functions. Equate the corresponding coefficients
of these two factorizations of $L$. Then, one can easily find expressions for $m_1, m_{01}, m_{01}, m_3$
in terms of $a,b,c_1,c_3$ and two parameter functions $F_1(x)$ and $F_2(y)$. However, if we compute the composition of
$M_1$ and $M_2$, all the parameters disappear, and, therefore,
any parametric family of factorization type $(S_1)(S_1 S_2)(S_2)$ is reducible.

Cases $a.$ and $b.$ also imply that irreducible families into four factors cannot exist
in case $II$.

\end{proof}
\section{Conclusion}

For second, third and fourth order LPDOs with completely factorable
symbols on the plane, we have completely investigated what
factorizations' types admit irreducible parametric factorizations.
For these factorization types, examples are given. Note the our
method is general and we cover the case of the ordinary operators as
a particular case. For operators of orders two and three, we
describe in addition the structure of their families of
factorizations. For the partial operators of order four, the
question remains open (for ordinary operators the possible number of
parameters in a family of factorizations has been investigated in
\cite{tsarev98}). For the case of partial differential operators we
would surmise that no more than two or one parameters (which could
be functions) are possible. Generalizations to LPDOs with arbitrary
symbols (without the complete factorization assumption), to high
order LPDOs, and to those in multiple-dimensional space are of
interest also.


\subsection*{Acknowledgment}
This work was supported by Austrian Science Foundation (FWF) under
the project SFB F013/F1304.

\begin{thebibliography}{1}
%
\bibitem{anderson_juras} I. Anderson, M. Juras,
 \textit{Generalized Laplace Invariants and the Method of Darboux.}
 Duke J. Math., \textbf{89}(1997), 351--375.
%
\bibitem{anderson97_2} I. Anderson, N. Kamran,
 \textit{The Variational Bicomplex for Hyperbolic Second-Order Scalar Partial Differential Equations in the
  Plane.}
 Duke J. Math., \textbf{87}(1997), 265--319.
%
\bibitem{athorne1995} C. Athorne. \textit{A $Z \times R$ Toda system.}
 Phys.Lett.A., \textbf{206}(1995), 162–-166.
%
\bibitem{athorne2002} H.Yilmaz, C.Athorne.
 \textit{The geometrically invariant form of evolution equations.}
 J.Phys.A., \textbf{35}(2002), 2619-–2625.
%
\bibitem{GS} D.Grigoriev, F.Schwarz.
 \textit{Factoring and Solving Linear Partial Differential Equations.},
 J.Computing, \textbf{73}(2004), 179--197.
%
\bibitem{GS05} D.Grigoriev, F.Schwarz.
 \textit{Generalized Loewy-Decomposition of D-Modules.}
 Proc. ISSAC'2005, ACMPress, 2005, 163-–170.
%
\bibitem{Darboux2} G. Darboux,
 \textit{Th\'eorie G\'en\'erale des Surfaces},
 Chelsea, New York, 1972.
%
\bibitem{Dini1901} U. Dini. \textit{Sopra una classe di equazioni a derivate parziali di
second ordine con un numero qualunque di variabili.}
 Atti Acc. Naz. dei Lincei. Mem. Classe fis., mat., nat. (ser. 5) \textbf{4}(1901), 121-–178.
 Also Opere \textbf{III}(1901), 489-–566.
%
\bibitem{Dini1902} U. Dini. \textit{Sopra una classe di equazioni a derivate parziali di
second'ordine.}
 Atti Acc. Naz. dei Lincei. Mem. Classe fis., mat.,
 nat. (ser. 5), \textbf{4}(1902), 431-–467.
 Also Opere \textbf{III}(1902), 613–-660.
%
\bibitem{Juras95} M. Juras.
\textit{Generalized Laplace invariants and classical integration
methods for second order scalar hyperbolic partial differential
equations in the plane.} Proc. Conf. Diff. geometry and
applications, Brno, Czech Republic (1996), 275--284.
%
\bibitem{Li_S_Ts} Z. Li, F. Schwarz, S.P. Tsarev.
 \textit{Factoring systems of linear PDEs with finite-dimensional solution
spaces.} J. Symbolic Computation, \textbf{36}(2003), 443-–471.
%
\bibitem{Roux1899} J. Le Roux. \textit{Extensions de la m\'ethode de Laplace aux 'equations
lin'eaires aux deriv\'ees partielles d\'ordre sup'erieur au second.}
Bull. Soc. Math. de France, \textbf{27}(1899), 237-–262. A digitized
copy is obtainable from http://www.numdam.org/
%
\bibitem{spain} E. Shemyakova, F. Winkler,
\textit{Obstacle to Factorization of LPDOs },
 Proc. Transgressive Computing, Granada, Spain, 2006.
%
\bibitem{dubna} E. Shemyakova, F. Winkler,
\textit{Obstacles to the Factorization of Linear Partial
Differential Operators into Several Factors.},
 J. Programming and Computer Software (2006), accepted.
%
\bibitem{sokolov}
 V.V. Sokolov and A.V. Zhiber.
\textit{On the Darboux integrable hyperbolic equations.} Physics
Letters A, \textbf{208}(1995), 303--308.
%
\bibitem{Tsarev_enum_ODE} S.P. Tsarev.
\textit{An algorithm for complete enumeration of all factorizations
of a linear ordinary differential operator.} Proceedings of
ISSAC'96, ACM Press, 226-–231.
%
\bibitem{tsarev98} S.P. Tsarev.
 \textit{Factorization of linear partial differential operators
and Darboux integrability of nonlinear PDEs.}
 SIGSAM Bulletin, \textbf{32}(1998), No. 4., 21-–28.
 Also Computer Science e-print cs.SC/9811002 at http://www.arxiv.org/.
%
\bibitem{tsarev05} S.P. Tsarev.
 \textit{Generalized Laplace Transformations and Integration of
Hyperbolic Systems of Linear Partial Differential Equations.}
 Proc. ISSAC'05, 325--331.
%
\bibitem{tsarev06} S.P. Tsarev.
 \textit{On factorization and solution of multidimensional
linear partial differential equations.}
 Computer Science e-print cs.SC/0609075 at http://www.arxiv.org/, 2006.
%
\end{thebibliography}
\end{document}